\newtheorem{thm}{Theorem}[section]
\newtheorem{lem}[thm]{Lemma}
\newtheorem{cor}[thm]{Corollary}
\theoremstyle{definition}			                						% Subseq. Thm Style
\newtheorem{mydef}[thm]{Definition}
\newtheorem{remark}[thm]{Remark}
\newtheorem{example}[thm]{Example}
\newtheorem{obs}[thm]{Observation}
\newcommand{\floor}[1]{\left\lfloor #1 \right\rfloor}
\title{Polynomials that preserve nonnegative monomial matrices}
\author{Benjamin J.~Clark 
\and Pietro Paparella}
\date{January 2, 2024}
\begin{document}
\maketitle

\begin{abstract}
    A recently-established necessary condition for polynomials that preserve the class of entrywise nonnegative matrices of a fixed order is shown to be necessary and sufficient for the class of nonnegative monomial matrices. Along the way, we provide a formula for computing an arbitrary power of a monomial matrix and a formula for computing the polynomial of a nonnegative monomial matrix.
\end{abstract}

%---------------------
\section{Introduction}

Motivated by the \emph{nonnegative inverse eigenvalue problem}, Loewy and London \cite{ll1978-79} asked for a characterization of 
\[ \mathscr{P}_n \coloneqq \{ p \in \mathbb{R}[t] \mid p(A) \ge 0,\ \forall A \in \mathsf{M}_n(\mathbb{R}),\ A \ge 0 \}. \]
Clearly, $\mathscr{P}_n$ contains polynomials with nonnegative coefficients, but it is known that it contains polynomials having some negative entries.

The characterization of $\mathscr{P}_1$ is known as the P\'{o}lya--Szeg\"{o} theorem, which asserts that $p \in \mathscr{P}_1$ if and only if
\[ p(t) = \left( f_1(t)^2 + f_2(t)^2 \right) + t \left( g_1(t)^2 + g_2(t)^2 \right), \]
where $f_1,f_2,g_1,g_2 \in \mathbb{R}[t]$ (see, e.g., Powers and Reznick \cite{pr2000}). 

Bharali and Holtz \cite{bh2008} gave partial results for the superset
\[ \mathscr{F}_n \coloneqq \{ f\ {\rm entire} \mid f(A) \ge 0,\ \forall A \in \mathsf{M}_n(\mathbb{R}),\ A \ge 0 \} \]
of $\mathscr{P}_n$, including a characterization of two-by-two matrices and results for certain structured nonnegative matrices, including upper-triangular matrices and circulant matrices.

More recently, Clark and Paparella \cite{clark2021polynomials} provided novel necessary conditions for $\mathscr{P}_n$. In particular, they showed that the coefficients of a polynomial belonging to $\mathscr{P}_n$ satisfy certain linear inequalities. It was also shown \cite[Corollary 4.5]{clark2021polynomials} that if $p \in \mathscr{P}_n$, $r \in \{ 0,\ldots,n-1 \}$, $m \coloneqq \deg p$, and $\mathcal{I}_{(m,n,r)} \coloneqq \{ 0 \le k \le m \mid k \bmod{n} = r \}$, then 
\[ p_{r}(t) = p_{(r,n)}(t) \coloneqq \sum_{k \in \mathcal{I}_{(m,n,r)}} a_k t^k \in \mathscr{P}_1. \] 
Since $\mathscr{P}_{k+1} \subseteq \mathscr{P}_{k}$, $\forall k \in \mathbb{N}$ (see, e.g., Bharali and Holtz \cite[Lemma 1]{bh2008}), it follows that 
\begin{equation}
    \label{necessary_condition}
    p_{(r,k)} \in \mathscr{P}_1,\ \forall k \in \{1,\ldots,n\},\ \forall r \in \{0,\ldots,k-1\}.
\end{equation}

In this work, condition \eqref{necessary_condition} is shown to be sufficient for the class of nonnegative monomial matrices. In addition, a formula is given for computing the polynomial of a monomial matrix.

%-----------------
\section{Notation}

If $n \in \mathbb{N}$, then $\mathsf{M}_n = \mathsf{M}_n(\mathbb{C})$ denotes the algebra of $n$-by-$n$ matrices with complex entries and $S_n$ denotes the symmetric group of order $n$. If $\sigma \in S_n$ and $x \in \mathbb{C}^n$, then $\sigma(x)$ denotes the vector whose $i$th-entry is $x_{\sigma(i)}$ and $P_\sigma \in \mathsf{M}_n$ denotes the permutation matrix corresponding to $\sigma$ (i.e., $P_\sigma$ is the the $n$-by-$n$ matrix whose $(i,j)$-entry is \(\delta_{\sigma(i),j}\), where $\delta_{ij}$ denotes the Kronecker delta). As is well-known, $\left( P_\sigma \right)^{-1} = P_{\sigma^{-1}} = \left(P_\sigma \right)^\top$. When the context is clear, $P_\sigma$ is abbreviated to $P$.  

If $x \in \mathbb{C}^n$, then $D_x \in \mathsf{M}_n$ denotes the diagonal matrix such that $d_{kk} = x_k$, $1 \le k \le n$. If $A \in \mathsf{M}_n$, then $A$ is called \textit{monomial}, a \textit{monomial matrix}, or a \textit{generalized permutation matrix} if there is an invertible diagonal matrix $D = D_x$ and a permutation matrix $P$ such that $A = D P$. The set of all $n$-by-$n$ monomial matrices is denoted by $\mathsf{GP}_n = \mathsf{GP}_n(\mathbb{C})$. 
If $A = D_x P$, with $x \in \mathbb{C}^n$, then 
\[\alpha_x \coloneqq \det D_x = \prod_{k=1}^n x_k. \]
The restriction of $\mathscr{P}_n$ to the class of nonnegative monomial matrices is denoted by $\mathscr{P}_n^{\rm mon}$. 
% The restriction of $\mathscr{P}_n$ and $\mathscr{F}_n$ to the class of nonnegative monomial matrices is denoted by $\mathscr{P}_n^{\rm mon}$ and $\mathscr{F}_n^{\rm mon}$, respectively. 

If $n \in \mathbb{N}$, then $\langle n \rangle \coloneqq \{ 1,\ldots, n \}$, $\langle n \rangle_0 \coloneqq \{0\} \cup \langle n \rangle$, and $\pi = \pi_n: \langle n \rangle \longrightarrow \langle n \rangle$ is the permutation defined by $\pi(i) = (i\bmod{n}) + 1$. The permutation matrix corresponding to $\pi$ is denoted by $C = C_n$. i.e., \( C = \left[ \delta_{\pi(i),j} \right] \in \mathsf{M}_n \). For example, if $n=3$, then  
\[\pi_3 = 
\begin{pmatrix} 
1 & 2 & 3 \\
2 & 3 & 1
\end{pmatrix}
~\text{and}~ 
C_3 = 
\begin{bmatrix} 
0 & 1 & 0   \\
0 & 0 & 1   \\
1 & 0 & 0 
\end{bmatrix}. \]

If $x \in \mathbb{C}^n$, then 
\begin{equation}
    \label{fundamental_monomial}
        K_x \coloneqq D_x C \in \mathsf{M}_n. 
\end{equation}
For example, if $x \in \mathbb{C}^3$, then 
\[ K_x = 
\begin{bmatrix} 
0   & x_1   & 0     \\
0   & 0     & x_2   \\
x_3 & 0     & 0 
\end{bmatrix}. \]

%------------
\begin{mydef}
[{\cite[Definition 3.1]{cp2022}}]
\label{indexsets}
    Let $p(t) = \sum_{k=0}^m a_k t^k \in \mathbb{C}[t]$, $n \in \mathbb{N}$, and $r \in \langle n-1 \rangle_0$. If   
    \[ \mathcal{I}_{(m,n,r)} \coloneqq \{ k \in \langle m \rangle_0 \mid k \bmod{n} = r \}, \]
    then the polynomial
    \begin{equation}
        \label{rmodnpart}
            p_{(r,n)}(t) \coloneqq \sum_{k \in \mathcal{I}_{(m,n,r)}} a_k t^k, 
    \end{equation} 
    is called the \emph{$r \bmod{n}$-part of $p$}.
\end{mydef}

%-------------
\begin{remark}
    If $m < n - 1$ and $r > m$, then $\mathcal{I}_{(m,n,r)} = \varnothing$. In such a case, the sum on the right-hand side of \eqref{rmodnpart} is empty. Consequently, 
    \[ 
    p_{(r,n)}(t) =
    \begin{cases}
        a_r t^r, & 0 \le r \le m \\
        0, & m < r \le n-1
    \end{cases}.
    \] 
\end{remark}

%----------
\begin{obs} 
[{\cite[Observation 3.2]{cp2022}}]
\label{obs:rpartsum}
    If $p \in \mathbb{C}[t]$, then \( p(t) = \sum_{r = 0}^{n-1} p_{(r,n)}(t) \).
\end{obs}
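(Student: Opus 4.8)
The plan is to show that the index sets $\mathcal{I}_{(m,n,r)}$, $r \in \langle n-1 \rangle_0$, partition $\langle m \rangle_0$, and then to conclude by summing the defining expressions \eqref{rmodnpart} and regrouping. Writing $p(t) = \sum_{k=0}^m a_k t^k$, the entire content of the statement is that collecting the monomials $a_k t^k$ according to the residue $k \bmod{n}$ neither drops nor duplicates any term.

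First I would invoke Euclidean division: for each $k \in \langle m \rangle_0$ there is a unique $r \in \langle n-1 \rangle_0$ with $k \bmod{n} = r$. Hence $k$ belongs to exactly one of the sets $\mathcal{I}_{(m,n,r)}$, namely the one indexed by $r = k \bmod{n}$; and every element of every $\mathcal{I}_{(m,n,r)}$ lies in $\langle m \rangle_0$ by definition. This establishes that $\langle m \rangle_0$ is the disjoint union of the $\mathcal{I}_{(m,n,r)}$ over $r \in \langle n-1 \rangle_0$. With this partition in hand I would compute
\[ \sum_{r=0}^{n-1} p_{(r,n)}(t) = \sum_{r=0}^{n-1} \sum_{k \in \mathcal{I}_{(m,n,r)}} a_k t^k = \sum_{k=0}^m a_k t^k = p(t), \]
where the middle equality is exactly the regrouping of the single sum $\sum_{k=0}^m$ by residue class: since the $\mathcal{I}_{(m,n,r)}$ are pairwise disjoint and exhaust $\langle m \rangle_0$, each monomial $a_k t^k$ is counted precisely once.

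As for difficulty, there is essentially no obstacle: the argument reduces to the elementary fact that reordering and grouping the terms of a finite sum leaves it unchanged. The only point deserving (minimal) care is the edge case recorded in the preceding remark, where $\mathcal{I}_{(m,n,r)} = \varnothing$ for $m < r \le n-1$; such empty index sets contribute the empty sum $0$ to the outer summation and therefore do not affect the computation.
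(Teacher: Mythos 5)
Your proof is correct: the paper states this observation without proof (importing it by citation), and your argument---that the sets $\mathcal{I}_{(m,n,r)}$ partition $\langle m \rangle_0$ by residue class mod $n$, so that regrouping the finite sum recovers $p$---is exactly the standard justification the citation relies on. Your handling of the empty index sets in the edge case $m < n-1$ is also consistent with the paper's own remark on that point.
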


%----------------------------
\section{Preliminary results}

Johnson and Paparella \cite[Lemma 3.3]{jp2016} used the \emph{relative gain array} of a matrix to give a short proof of the elementary fact that if $x \in \mathbb{C}^n$, then $D_{\sigma(x)} = P_\sigma D_x P_\sigma^\top$, i.e., 
\begin{align}
    P_\sigma D_x = D_{\sigma(x)} P_\sigma. \label{jplemma} 
\end{align}

%----------
\begin{lem} 
[Frobenius normal form for monomial matrices]
\label{lem:frobenius_normal_form}
    If $A \in \mathsf{GP}_n$, then there is a permutation matrix $Q$ such that 
    \begin{align}
        Q^\top A Q 
    = \bigoplus_{i=1}^k D_{y_i} C_{n_i} 
    = \bigoplus_{i=1}^k K_{y_i}, \label{fnfmm}
    \end{align} 
    where $1 \leq k \leq n$, $y = Q^\top x$, and $y_i \in \mathbb{C}^{n_i}$ is the partition of $y$ into $k$ blocks of size $n_i$ for $i \in \langle{k}\rangle$.
\end{lem}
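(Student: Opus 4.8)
The plan is to prove this by analyzing the cycle structure of the underlying permutation and using the relation \eqref{jplemma} to conjugate the diagonal factor through the permutation matrix. Let me think about what we need to establish.

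We have $A = D_x P_\sigma \in \mathsf{GP}_n$. The key structural fact is that any permutation $\sigma$ decomposes into disjoint cycles, and a suitable relabeling (conjugation by a permutation matrix $Q$) puts $P_\sigma$ into block-diagonal form where each block is a cyclic permutation matrix $C_{n_i}$. The subtlety is that conjugating $A = D_x P_\sigma$ by $Q$ also acts on the diagonal factor, so I need to track how $D_x$ transforms and verify the diagonal pieces match up with the cyclic blocks to yield $D_{y_i} C_{n_i} = K_{y_i}$.

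First I would write $A = D_x P$ with $P = P_\sigma$ and invoke the disjoint cycle decomposition of $\sigma$. For a single cycle, I would observe that after relabeling the indices so the cycle reads $(1\,2\,\cdots\,n_i)$, the corresponding permutation matrix is exactly $C_{n_i}$; this is the combinatorial heart and I would choose $Q = Q_\tau$ for a permutation $\tau$ realizing this relabeling. Next, I would compute $Q^\top A Q = Q^\top D_x P_\sigma Q$ and push the diagonal factor through using \eqref{jplemma}: since $Q^\top D_x = Q^\top D_x Q Q^\top = D_{\tau^{-1}(x)} Q^\top$ (reading off the conjugation formula $P_\sigma D_x P_\sigma^\top = D_{\sigma(x)}$), the conjugated matrix becomes $D_y (Q^\top P_\sigma Q)$ where $y = Q^\top x$ and $Q^\top P_\sigma Q = P_{\tau^{-1}\sigma\tau}$ is the block-diagonal permutation $\bigoplus_i C_{n_i}$.

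I would then verify that $D_y \left( \bigoplus_{i=1}^k C_{n_i} \right)$ splits as a direct sum: because the diagonal matrix $D_y$ respects the block partition (it is block-diagonal with blocks $D_{y_i}$ for the partition $y = (y_1,\ldots,y_k)$ induced by $\langle n \rangle = \bigsqcup_i B_i$), multiplying blockwise gives $\bigoplus_{i=1}^k D_{y_i} C_{n_i} = \bigoplus_{i=1}^k K_{y_i}$, using the definition \eqref{fundamental_monomial}. This establishes \eqref{fnfmm}.

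The main obstacle I anticipate is bookkeeping: making precise the relabeling permutation $\tau$ so that each cycle of $\sigma$ maps to a contiguous block of indices on which the permutation acts as the standard cycle $\pi_{n_i}$, and confirming that the diagonal entries are permuted consistently so that $y_i$ is genuinely the block of $y = Q^\top x$ indexed by $B_i$. In effect one must check that conjugation by $Q$ simultaneously block-diagonalizes $P_\sigma$ into cyclic blocks \emph{and} carries $D_x$ to the matching block-diagonal $\bigoplus_i D_{y_i}$; the algebraic identity \eqref{jplemma} is exactly what guarantees these two actions are compatible, so once the cycle-to-block relabeling is set up correctly, the rest is a direct computation.
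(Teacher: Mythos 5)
Your proposal is correct and follows essentially the same route as the paper: conjugate by a permutation matrix $Q$ that brings $P$ to the block-diagonal form $\bigoplus_i C_{n_i}$, use \eqref{jplemma} to pass $D_x$ through $Q^\top$ as $D_y Q^\top$ with $y = Q^\top x$, and split $D_y \bigoplus_i C_{n_i}$ blockwise into $\bigoplus_i K_{y_i}$. The only cosmetic difference is that you sketch the cycle-decomposition argument for block-diagonalizing $P$, whereas the paper simply cites this as the known Frobenius normal form for permutation matrices.
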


\begin{proof}
By hypothesis, $A = D_x P$ and by the Frobenius normal form for permutation matrices (see, e.g., Paparella \cite[Corollary 4.3]{p2019}), there is a permutation matrix $Q = Q_\gamma$ such that 
\[ Q^\top P Q = \bigoplus_{i=1}^k C_{n_i},\]
where $1 \leq k \leq n$. If $y \coloneqq \gamma^{-1}(x)$, where $y_i \in \mathbb{C}^{n_i}$ is the partition of $y$ into $k$ blocks of size $n_i$ for  $i \in \langle{k}\rangle$, then
\begin{align*}
    Q^\top A Q 
    &= \left(Q^\top D_x \right) P Q     \\
    &= \left( D_y Q^\top \right) P Q    \\
    &= D_y \left(Q^\top P Q\right) 
    = D_y \bigoplus_{i=1}^k C_{n_i} 
    = \bigoplus_{i=1}^k D_{y_i} C_{n_i}
    = \bigoplus_{i=1}^k K_{y_i}. \qedhere
\end{align*}
\end{proof}

%----------
\begin{lem}
\label{lem:primpower}
    Let $x \in \mathbb{C}^n$ and let $j$ be a nonnegative integer. If $q \coloneqq \floor{j/n}$ and $r = j\bmod{n}$, then
    \begin{equation}
        K_x^j = \alpha_x^q \left( \prod_{j=0}^{r-1} D_{\pi^j(x)} \right) C^r. \label{Kxpower}
    \end{equation}
\end{lem}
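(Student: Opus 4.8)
The plan is to push all copies of $C$ to the right in the product $K_x^j = (D_x C)^j$ by repeatedly applying the commutation relation \eqref{jplemma}, and then to collapse the resulting product of diagonal matrices using the fact that $\pi$ is a single $n$-cycle.

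First I would specialize \eqref{jplemma} to $\sigma = \pi$ (so that $P_\sigma = C$) to obtain $C D_x = D_{\pi(x)} C$, and iterate this to get $C^k D_x = D_{\pi^k(x)} C^k$ for every nonnegative integer $k$. With this identity in hand, a straightforward induction on $j$ establishes that
\[ K_x^j = \Bigl( \prod_{i=0}^{j-1} D_{\pi^i(x)} \Bigr) C^j. \]
The base case $j = 0$ is the empty product equal to $I$, and the inductive step multiplies $K_x^j$ on the right by $K_x = D_x C$ and uses $C^j D_x = D_{\pi^j(x)} C^j$ to move the new factor $D_x$ past the accumulated $C^j$, which appends $D_{\pi^j(x)}$ to the product and raises $C^j$ to $C^{j+1}$.

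Next I would reduce modulo the period $n$. Since $C$ is the permutation matrix of the $n$-cycle $\pi$, we have $C^n = I$ and $\pi^{i+n} = \pi^i$; hence $C^j = C^r$ and $D_{\pi^{i+n}(x)} = D_{\pi^i(x)}$. Writing $j = qn + r$ and grouping the product $\prod_{i=0}^{j-1} D_{\pi^i(x)}$ into $q$ complete blocks of length $n$ followed by the remaining $r$ factors, periodicity lets me replace each block by the single diagonal matrix $\prod_{i=0}^{n-1} D_{\pi^i(x)}$. The key observation is that this block equals $\alpha_x I$: its $(k,k)$-entry is $\prod_{i=0}^{n-1} x_{\pi^i(k)}$, and because $\pi$ is an $n$-cycle the indices $\pi^0(k), \ldots, \pi^{n-1}(k)$ run over all of $\langle n \rangle$, so the entry is $\prod_{\ell=1}^{n} x_\ell = \alpha_x$. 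Thus the $q$ complete blocks contribute $\alpha_x^q I$ and the tail contributes $\prod_{i=0}^{r-1} D_{\pi^i(x)}$, which combined with $C^j = C^r$ yields \eqref{Kxpower}.

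The only genuinely substantive step is recognizing that a full period of the diagonal product collapses to $\alpha_x I$; everything else is bookkeeping with the commutation relation and the order of $\pi$. I would keep an eye on the degenerate cases $r = 0$ (an empty tail product, giving $K_x^{qn} = \alpha_x^q I$) and $j < n$ (where $q = 0$) to confirm that the formula reads correctly in both extremes.
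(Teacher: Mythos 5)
Your proof is correct, and it takes a genuinely different (though closely related) route from the paper's. The paper runs a single induction directly on the reduced formula \eqref{Kxpower}: the inductive step multiplies by $K_x = D_x C$, uses \eqref{jplemma} to pass $D_x$ through $C^r$, and then splits into the cases $0 \le r \le n-2$ and $r = n-1$, absorbing the completed block $\prod_{j=0}^{n-1} D_{\pi^j(x)} = \alpha_x I$ into $\alpha_x^{q+1}$ whenever the exponent crosses a multiple of $n$. You instead factor the argument in two stages: a simpler induction establishes the unreduced identity $K_x^j = \bigl( \prod_{i=0}^{j-1} D_{\pi^i(x)} \bigr) C^j$, after which a one-shot modular reduction using $C^n = I$, $\pi^{i+n} = \pi^i$, and the entrywise orbit computation $\prod_{i=0}^{n-1} x_{\pi^i(k)} = \alpha_x$ collapses the $q$ complete blocks to $\alpha_x^q I$ and the power $C^j$ to $C^r$. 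The essential ingredients are the same in both proofs --- \eqref{jplemma} specialized to $\sigma = \pi$, and the fact that a full period of the diagonal product equals $\alpha_x I$ because $\pi$ is an $n$-cycle --- but your decomposition isolates the orbit argument as a single standalone step (with an explicit justification that $\pi^0(k), \ldots, \pi^{n-1}(k)$ exhaust $\langle n \rangle$, which the paper only gestures at via $\vert \pi \vert = n$) rather than letting it recur inside the induction, and your unreduced intermediate formula is a reusable identity in its own right. The paper's one-pass induction is more compact but must carry the wrap-around case split at every step; your version also handles the degenerate cases $r = 0$ and $q = 0$ cleanly, where the paper's final display in the case $r = n-1$ contains a small indexing slip (it writes $\prod_{j=0}^{0} D_{\pi^j(x)}$ where the empty product $\prod_{j=0}^{-1}$ is meant) that your formulation avoids.
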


\begin{proof}
Proceed by induction on $j$. For the base-case, if $j = 0$, then $q = r = 0$, and 
\[ K_x^0 = I = \alpha_x^0 \left( \prod_{j=0}^{-1} D_{\pi^j(x)} \right) K_x^0, \]
given that the product on the right-hand side is empty and equal to $I$.

For the induction-step, assume that the result holds when $j = \ell \ge 0$ and write $\ell = qn + r$, where $0 \leq r \le n - 1$. Notice that  
\begin{align*}
    K_x^{\ell+1} = K_x^\ell K_x 
    &= \alpha_x^q \left( \prod_{j=0}^{r-1} D_{\pi^j(x)} \right) C^r D_x C \\
    &= \alpha_x^q \left(\prod_{j=0}^{r-1} D_{\pi^j(x)} \right) D_{\pi^r (x)} C^r C \tag{by \eqref{jplemma}} \\
    &= \alpha_x^q \prod_{j=0}^{r} D_{\pi^j(x)} C^{r+1},
\end{align*}
and if $0 \leq r \le n - 2$, then $r+1 = (\ell+1) \bmod{n}$ and the result follows. 

Otherwise, if $r = n - 1$, then $\ell + 1 = n(q+1)$, $(\ell+1) \bmod{n} = 0$, and
\begin{align*}
    K_x^{\ell+1} 
    = \alpha_x^q \prod_{j=0}^{n-1} D_{\pi^j(x)} C^n 
    = \alpha_x^q \prod_{j=0}^{n-1} D_{\pi^j(x)} C^0.   
\end{align*}
Given that $\vert\pi\vert = n$ (here, $\vert \cdot \vert$ denotes the order of $\pi$ as a group-element of $S_n$), the matrix 
\[ \prod_{j=0}^{n-1} D_{\pi^j(x)} = D_{\prod_{j=0}^{n-1} \pi^j(x)} \]
is a diagonal matrix such that every diagonal entry equals $\alpha_x$. Thus,  
\[ K_x^{\ell+1} = \alpha_x^{q+1} \prod_{j=0}^0 D_{\pi^j(x)} C^0, \]
as desired.
\end{proof}

%-------------
\begin{remark}
    If $0 \le r \le n-1$, then $\floor{r/n} = 0$ and applying \eqref{Kxpower} in this case yields  
    \[ K_x^r = \left( \prod_{j=0}^{r-1} D_{\pi^j(x)} \right) C^r. \]
    Thus, if $j$ is a nonnegative integer, $q \coloneqq \floor{j/n}$, and $r = j\bmod{n}$, then $K_x^j= \alpha_x^q K_x^r$.
\end{remark}

%---------------------
\section{Main results}
%---------------------

Hereinafter, it is assumed that
\[ p(t) = \sum_{k=0}^m a_k t^k \in \mathbb{R}[t], \]
where $a_m \not = 0$.

The following result gives a closed-form formula for computing the polynomial of a nonnegative monomial matrix. 

%----------
\begin{thm}
\label{thm:circpoly}
    If $x > 0$ and $K_x$ is defined as in \eqref{fundamental_monomial}, then 
    \[p(K_x) = \sum_{r=0}^{n-1} \frac{p_{(r,n)}(\alpha_x^{1/n})}{\alpha_x^{r/n}} K_x^r.\]
\end{thm}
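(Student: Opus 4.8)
The plan is to expand $p(K_x)$ directly and regroup its terms according to the residue of the exponent modulo $n$, then collapse each residue class onto the single matrix $K_x^r$ using the Remark that follows Lemma \ref{lem:primpower}. First I would write $p(K_x) = \sum_{k=0}^m a_k K_x^k$ and partition the index set $\langle m \rangle_0$ into residue classes, exactly the partition underlying Observation \ref{obs:rpartsum}, to obtain
\[
p(K_x) = \sum_{r=0}^{n-1} \sum_{k \in \mathcal{I}_{(m,n,r)}} a_k K_x^k.
\]

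The key step is to convert each power $K_x^k$ with $k \in \mathcal{I}_{(m,n,r)}$ into a scalar multiple of $K_x^r$. For such $k$ we have $k \bmod n = r$ and $\floor{k/n} = (k-r)/n$, so the Remark following Lemma \ref{lem:primpower} gives $K_x^k = \alpha_x^{(k-r)/n} K_x^r$. Since $x > 0$, the quantity $\alpha_x = \prod_{k} x_k$ is a positive real, so the fractional powers $\alpha_x^{(k-r)/n}$ and $\alpha_x^{1/n}$ are unambiguous positive reals. Substituting and factoring the exponent-independent matrix $K_x^r$ out of the inner sum yields
\[
p(K_x) = \sum_{r=0}^{n-1} \Bigl( \sum_{k \in \mathcal{I}_{(m,n,r)}} a_k\, \alpha_x^{(k-r)/n} \Bigr) K_x^r.
\]

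Finally I would identify the scalar coefficient of each $K_x^r$ with the claimed expression. By Definition \ref{indexsets}, $p_{(r,n)}(\alpha_x^{1/n}) = \sum_{k \in \mathcal{I}_{(m,n,r)}} a_k\, \alpha_x^{k/n}$; dividing by $\alpha_x^{r/n}$ distributes across the finite sum and reproduces exactly $\sum_{k \in \mathcal{I}_{(m,n,r)}} a_k\, \alpha_x^{(k-r)/n}$, which is the coefficient obtained above. This completes the identification and hence the formula.

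I do not anticipate a genuine obstacle: the argument is a bookkeeping computation resting entirely on the identity $K_x^k = \alpha_x^{\floor{k/n}} K_x^r$ recorded in the Remark. The one point that requires care is that the fractional powers of $\alpha_x$ be well-defined, which is precisely what the hypothesis $x > 0$ guarantees; without positivity the scalar $\alpha_x^{1/n}$ would be ambiguous and the statement would require reinterpretation.
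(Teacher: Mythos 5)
Your proposal is correct and follows essentially the same route as the paper: both expand $p(K_x)$ via the residue-class partition of Observation \ref{obs:rpartsum}, reduce each $K_x^k$ to $\alpha_x^{q_k} K_x^r$ using Lemma \ref{lem:primpower} (your citation of the subsequent Remark is just the same fact repackaged, since $q_k = (k-r)/n$), and then identify the scalar coefficient with $p_{(r,n)}(\alpha_x^{1/n})/\alpha_x^{r/n}$ by the identical algebraic manipulation. Your added remark that $x > 0$ is what makes the fractional powers of $\alpha_x$ well-defined is a sensible observation the paper leaves implicit, but the argument itself is the paper's.
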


\begin{proof}
If $k \in \mathcal{I}_{(m,n,r)}$, then $k = q_k n + r$ where $0 \leq r < n$. By Lemma \ref{lem:primpower} and Observation \ref{obs:rpartsum} we have
\begin{align*}
    p(K_x) &= \sum_{r = 0}^{n-1} p_{(r,n)}(K_x) 
    = \sum_{r = 0}^{n-1} \sum_{k \in \mathcal{I}_{(m,n,r)}} a_k K_x^k 
    = \sum_{r = 0}^{n-1} \sum_{k \in \mathcal{I}_{(m,n,r)}} a_k \alpha_x^{q_k} K_x^r.
\end{align*}
Finally, notice that
\begin{align*}
    \sum_{k \in \mathcal{I}_{(m,n,r)}} a_k \alpha_x^{q_k}
    &= \sum_{k \in \mathcal{I}_{(m,n,r)}} a_k \frac{\alpha_x^{q_k + r/n}}{\alpha_x^{r/n}} \\
    &= \frac{1}{\alpha_x^{r/n}}\sum_{k \in \mathcal{I}_{(m,n,r)}} a_k \alpha_x^{k/n} \\
    &= \frac{p_{(r,n)} (\alpha_x^{1/n})}{\alpha_x^{r/n}},
\end{align*}
as desired.
\end{proof}

%----------
\begin{cor} 
    \label{cor:monpoly}
    If $A = D_x P \ge 0$ with Frobenius normal form given by \eqref{fnfmm}, then 
    \begin{align} 
    \label{eq:monpoly}
    p(A) 
    = Q \left(  
    \bigoplus_{i=1}^k \sum_{r=0}^{n_i - 1} \frac{p_{(r,n_i)} \left(\alpha_{x_i}^{1/n_i} \right) }{\alpha_{x_i}^{r/n_i}} K_{x_i}^r 
    \right) Q^\top. 
    \end{align}
\end{cor}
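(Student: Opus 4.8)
The plan is to deduce this directly from Theorem \ref{thm:circpoly} by transporting the polynomial through the block decomposition supplied by Lemma \ref{lem:frobenius_normal_form}, using two elementary properties of polynomial evaluation: its invariance under conjugation by a permutation matrix and its compatibility with direct sums. Throughout, write $x_i$ for the $i$th block of $y = Q^\top x$ appearing in \eqref{fnfmm}, so that $Q^\top A Q = \bigoplus_{i=1}^k K_{x_i}$.

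First I would verify the positivity that is needed to invoke Theorem \ref{thm:circpoly}. Since $A = D_x P$ with $P$ a permutation matrix, the nonzero entries of $A$ are exactly the entries of $x$; the hypothesis $A \ge 0$ together with the invertibility of $D_x$ forces $x > 0$. As $y = Q^\top x$ is merely a rearrangement of $x$, each block $x_i > 0$, and hence $\alpha_{x_i} = \prod_k (x_i)_k > 0$. In particular the fractional powers $\alpha_{x_i}^{1/n_i}$ and $\alpha_{x_i}^{r/n_i}$ on the right-hand side of \eqref{eq:monpoly} are well defined, and Theorem \ref{thm:circpoly} applies to every $K_{x_i}$.

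Next I would set $B \coloneqq Q^\top A Q = \bigoplus_{i=1}^k K_{x_i}$, equivalently $A = Q B Q^\top$. Because $Q$ is a permutation matrix, $Q^\top = Q^{-1}$ and $\left( Q B Q^\top \right)^\ell = Q B^\ell Q^\top$ for every nonnegative integer $\ell$; summing against the coefficients of $p$ gives $p(A) = Q\, p(B)\, Q^\top$. Likewise, since powers of a direct sum are the direct sums of the powers, linearity of $B \mapsto p(B)$ in the monomials $B^\ell$ yields
\[ p\!\left( \bigoplus_{i=1}^k K_{x_i} \right) = \bigoplus_{i=1}^k p\!\left( K_{x_i} \right). \]

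Finally I would apply Theorem \ref{thm:circpoly} to each summand and reassemble, obtaining
\[ p(A) = Q\, p(B)\, Q^\top = Q\left( \bigoplus_{i=1}^k p\!\left( K_{x_i} \right) \right) Q^\top = Q\left( \bigoplus_{i=1}^k \sum_{r=0}^{n_i-1} \frac{p_{(r,n_i)}\!\left( \alpha_{x_i}^{1/n_i} \right)}{\alpha_{x_i}^{r/n_i}} K_{x_i}^r \right) Q^\top, \]
which is precisely \eqref{eq:monpoly}. The argument is essentially bookkeeping, and the only point that genuinely requires care is the very first one: checking that entrywise nonnegativity of $A$ propagates to strict positivity of each block $x_i$, and therefore of each $\alpha_{x_i}$, since this is exactly what licenses the fractional exponents in Theorem \ref{thm:circpoly}. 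Everything after that is the routine passage of a polynomial through a conjugation and a direct-sum decomposition.
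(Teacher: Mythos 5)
Your proof is correct and takes essentially the same route as the paper's one-line argument: conjugate by $Q$ via Lemma \ref{lem:frobenius_normal_form}, use $p\left(QBQ^\top\right) = Q\,p(B)\,Q^\top$ together with $p\left(\bigoplus_i K_{x_i}\right) = \bigoplus_i p\left(K_{x_i}\right)$, and apply Theorem \ref{thm:circpoly} to each block. Your explicit verification that $A \ge 0$ with $D_x$ invertible forces $x > 0$, hence $\alpha_{x_i} > 0$ for every block, supplies the hypothesis of Theorem \ref{thm:circpoly} that the paper's proof leaves implicit, which is a worthwhile addition rather than a deviation.
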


\begin{proof}
    Follows from Lemma \ref{lem:frobenius_normal_form}, Theorem \ref{thm:circpoly}, and noting that for a matrix $A \in \mathsf{M}_n$ and permutation matrix $P$ we have $p \left( PAP^\top \right) = P p(A) P^\top$.
\end{proof}

%--------------
\begin{example}
If 
    \[
    A = \begin{bmatrix}
        0 & 3 & 0 & 0 \\
        0 & 0 & 5 & 0 \\
        0 & 0 & 0 & 2 \\
        1 & 0 & 0 & 0
    \end{bmatrix}
    \]
and 
    \[
    p(t) = t^{20} + 4t^{15} + 2t^8 + 3t^2 + t + 5,
    \]
then, by Theorem \ref{thm:circpoly},  
    \begin{align*}
        p(A) 
        &= \sum_{r=0}^{3} \frac{p_{(r,4)}(30^{1/4})}{30^{r/4}} A^r                                                                                                  \\
        &= p_{(0,4)}(30^{1/4}) I_4 + \frac{p_{(1,4)}(30^{1/4})}{30^{1/4}} A + \frac{p_{(2,4)}(30^{1/4})}{30^{1/2}} A^2 + \frac{p_{(3,4)}(30^{1/4})}{30^{3/4}} A^3   \\
        &= (30^{5} + 2\cdot 30^2 + 5) I_4 + A + 3 A^2 + 30^3 A^3.
    \end{align*}
\end{example}

%----------
\begin{thm} 
    \label{thm:charmon}
        $p \in \mathscr{P}_n^{\rm mon}$ if and only if $p_{(r,k)} \in \mathscr{P}_1,\ \forall k \in \langle n \rangle,\ \forall r \in \langle n - 1 \rangle_0$.
\end{thm}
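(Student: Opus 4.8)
The plan is to prove both implications by reducing everything to the single-block case $A = K_x$ via the Frobenius normal form and the formula of Corollary~\ref{cor:monpoly}, and then to exploit one structural observation. For $x > 0$ and $0 \le r \le n-1$, the matrix $K_x^r = \left(\prod_{j=0}^{r-1} D_{\pi^j(x)}\right) C^r$ has strictly positive entries supported exactly on the positions $\{(i,\pi^r(i))\}$, and these supports are pairwise disjoint as $r$ ranges over $\langle n-1\rangle_0$, since $\pi^r = \pi^{r'}$ forces $r \equiv r' \pmod{n}$. Consequently, a linear combination $\sum_{r=0}^{n-1} c_r K_x^r$ is entrywise nonnegative \emph{if and only if} every coefficient $c_r$ is nonnegative. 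This equivalence is what lets one pass between matrix nonnegativity and nonnegativity of scalar quantities in both directions.

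For the backward direction I would assume every $p_{(r,k)} \in \mathscr{P}_1$ and take an arbitrary nonnegative monomial matrix $A = D_x P \ge 0$. Since $A \ge 0$, its nonzero entries are positive, so $x > 0$; by Lemma~\ref{lem:frobenius_normal_form} each block $K_{y_i}$ of the Frobenius normal form has $y_i > 0$ and hence $\alpha_{y_i} > 0$. Applying Corollary~\ref{cor:monpoly}, $p(A)$ is permutation-similar to $\bigoplus_i \sum_{r=0}^{n_i-1} c_{i,r} K_{y_i}^r$ with $c_{i,r} = p_{(r,n_i)}(\alpha_{y_i}^{1/n_i})/\alpha_{y_i}^{r/n_i}$. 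Because $n_i \in \langle n\rangle$ and $r \in \langle n_i-1\rangle_0 \subseteq \langle n-1\rangle_0$, the hypothesis gives $p_{(r,n_i)} \in \mathscr{P}_1$, so $p_{(r,n_i)}(\alpha_{y_i}^{1/n_i}) \ge 0$; together with $\alpha_{y_i}^{r/n_i} > 0$ this yields $c_{i,r} \ge 0$. By the observation above each block is nonnegative, the direct sum is nonnegative, and conjugation by the permutation $Q$ preserves nonnegativity, so $p(A) \ge 0$ and $p \in \mathscr{P}_n^{\rm mon}$.

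For the forward direction I would assume $p \in \mathscr{P}_n^{\rm mon}$ and fix $k \in \langle n\rangle$. For $r \ge k$ one has $\mathcal{I}_{(m,k,r)} = \varnothing$, so $p_{(r,k)} = 0 \in \mathscr{P}_1$ trivially, and it suffices to treat $r \in \langle k-1\rangle_0$. For each $y > 0$ in $\mathbb{R}^k$ the matrix $A \coloneqq K_y \oplus I_{n-k}$ is a nonnegative monomial matrix in $\mathsf{GP}_n$, so $p(A) \ge 0$, forcing the block $p(K_y) \ge 0$. By Theorem~\ref{thm:circpoly}, $p(K_y) = \sum_{r=0}^{k-1} c_r K_y^r$ with $c_r = p_{(r,k)}(\alpha_y^{1/k})/\alpha_y^{r/k}$, and the disjoint-support observation forces every $c_r \ge 0$; since $\alpha_y^{r/k} > 0$ this gives $p_{(r,k)}(\alpha_y^{1/k}) \ge 0$. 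As $y$ ranges over all positive vectors, $\alpha_y = \prod_j y_j$ ranges over all of $(0,\infty)$, so $\alpha_y^{1/k}$ sweeps $(0,\infty)$ and we conclude $p_{(r,k)}(t) \ge 0$ for every $t > 0$; continuity of $p_{(r,k)}$ extends this to $t = 0$, giving $p_{(r,k)} \in \mathscr{P}_1$.

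The main obstacle is the forward direction, specifically the two moves that turn a single hypothesis $p(A) \ge 0$ into nonnegativity of each $r \bmod k$-part: first, recognizing that the pairwise-disjoint supports of $K_y^0,\ldots,K_y^{k-1}$ let one read off nonnegativity of each coefficient $c_r$ individually rather than only of their sum; and second, realizing every cycle length $k \le n$ inside $\mathsf{GP}_n$ by padding with an identity block, so that the conditions for all $k \in \langle n\rangle$ are obtained and not merely the case $k = n$. The backward direction is then a direct substitution into Corollary~\ref{cor:monpoly}.
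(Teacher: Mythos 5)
Your proof is correct, and its sufficiency half coincides with the paper's: both feed the hypothesis into Corollary~\ref{cor:monpoly} and observe that each coefficient $p_{(r,n_i)}\bigl(\alpha_{y_i}^{1/n_i}\bigr)/\alpha_{y_i}^{r/n_i}$ is nonnegative, whence $p(A)\ge 0$ because the powers $K_{y_i}^r$ are nonnegative and permutation similarity preserves nonnegativity. Where you genuinely diverge is the forward (necessity) direction: the paper disposes of it in one sentence by citing \cite[Corollary 4.9]{clark2021polynomials}, the fact that condition~\eqref{necessary_condition} is necessary for $\mathscr{P}_n$, whereas you prove it from scratch inside the monomial class by testing $p$ on $K_y \oplus I_{n-k}$ for every $y > 0$, reading off each coefficient individually from the pairwise disjoint supports of $I, K_y, \ldots, K_y^{k-1}$, and letting $\alpha_y^{1/k}$ sweep $(0,\infty)$, with continuity covering $t=0$. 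Your route buys self-containedness and is in fact slightly more careful than the paper's: since $\mathscr{P}_n \subseteq \mathscr{P}_n^{\rm mon}$, necessity of a condition for the \emph{subset} $\mathscr{P}_n$ does not formally transfer to the \emph{superset} $\mathscr{P}_n^{\rm mon}$; the citation works only because the witness matrices in the cited proof are themselves nonnegative monomial matrices, a point your direct argument renders moot, at the cost of a somewhat longer proof than the paper's two-line version. One small repair is needed in your support argument: to conclude that the supports of $K_y^r$ and $K_y^{r'}$ are disjoint, it is not enough that $\pi^r \neq \pi^{r'}$, since two distinct permutations may still agree at some indices; what you need (and what is true, because $\pi$ is a single $k$-cycle) is that $\pi^{r-r'}$ is fixed-point free whenever $0 < \lvert r - r' \rvert < k$, so that $\pi^r(i) = \pi^{r'}(i)$ for even one index $i$ already forces $r \equiv r' \pmod{k}$.
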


\begin{proof}
Since condition \eqref{necessary_condition} is necessary for polynomials belonging to $\mathscr{P}_n$ (Clark and Paparella \cite[Corollary 4.9]{clark2021polynomials}), it must also be necessary for polynomials belonging to $\mathscr{P}_n^{\rm mon}$.  

For sufficiency, if $A = D_x P \ge 0$ with Frobenius normal form given by \eqref{fnfmm}, then $p(A) \ge 0$ in view of Corollary \ref{cor:monpoly} and \eqref{eq:monpoly}. 
\end{proof}

\bibliographystyle{abbrv}
\bibliography{monomialmatrices}

\end{document}